\newtheorem{theorem}{Theorem}[section]
\newtheorem{lem}[theorem]{Lemma}
\newtheorem{theo}[theorem]{Theorem}
\newtheorem{prop}[theorem]{Proposition}
\theoremstyle{definition}
\newtheorem{defi}[theorem]{Definition}
\theoremstyle{definition}
\theoremstyle{remark}
\numberwithin{equation}{section}
\begin{document}

\title{Nonplanar isoperimetric inequality for random groups}

\author{Tomasz Odrzyg{\'o}{\'z}d{\'z}}
\address{Institute of Mathematics, Polish Academy of Science,
Warsaw, {\'S}niadeckich 8}
\email{tomaszo@impan.pl}

\maketitle

\section{Introduction}

In \cite{gro93} Gromov introduced  the notion of a random finitely presented group on $m \geq 2$ generators at density $d \in (0,1)$. The idea was to fix a set of $m$ generators and consider presentations with $(2m-1)^{dl}$ relators, each of which is a random reduced word of length $l$. Gromov investigated the properties of random groups when $l$ goes to infinity. We say that a property occurs in the Gromov density model \textit{with overwhelming probability (w.o.p.)} if the probability that a random group has this property converges to $1$ when $l \rightarrow \infty$.

There are many important properties of this model: for densities $> \frac{1}{2}$ a random group is trivial w.o.p. \cite[Theorem 11]{inv}; for densities $< \frac{1}{2}$ a random group is, w.o.p., infinite, hyperbolic and torsion-free \cite[Theorem 11]{inv}; for densities $<\frac{1}{5}$ a random group does not have Property (T) with overwhelming probability \cite[Corollary 7.5]{ow11}.

One of the basic tools to investigate the geometry of the Cayley complex of a random group is the ``isoperimetric inequality'' proved in \cite{inv}:

\begin{theo}\label{thm:ie} For any $\varepsilon > 0$, in the Gromov random group model at density $d < \frac{1}{2}$ with overwhelming probability all reduced van Kampen diagrams associated to the group presentation satisfy

$$|\partial D| \geq l(1 - 2d -  \varepsilon) |D|,$$
\end{theo}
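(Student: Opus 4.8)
The plan is to run a \textbf{first-moment argument}: I will bound the expected number of reduced van Kampen diagrams that violate the desired inequality, show this expectation tends to $0$ as $l\to\infty$, and then conclude by Markov's inequality that w.o.p. no such diagram exists. Fix $\varepsilon>0$. Call a reduced diagram $D$ with $k:=|D|$ faces \emph{bad} if $|\partial D| < l(1-2d-\varepsilon)k$. It then suffices to prove that
$$\sum_{k\geq 1}\mathbf{E}\bigl[\#\{\text{bad reduced diagrams with } k \text{ faces}\}\bigr]\longrightarrow 0 .$$

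The first step is a double-counting identity that already explains the constant $1-2d$. Each face of $D$ is bounded by a cyclic word of length $l$ (a relator), so summing all face-perimeters counts every internal edge twice and every boundary edge once:
$$kl = 2E_i + |\partial D|,$$
where $E_i$ denotes the number of internal edges. Thus $E_i=(kl-|\partial D|)/2$, and badness is exactly the condition $E_i > (d+\tfrac{\varepsilon}{2})\,lk$. The point of the whole argument is to play $E_i$ off against the number $N=(2m-1)^{dl}$ of available relators.

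The core is the probabilistic count, which I would obtain by revealing $D$ face by face in a connected order $f_1,\dots,f_k$, so that each $f_i$ with $i\geq 2$ is glued to $D_{i-1}=f_1\cup\cdots\cup f_{i-1}$ along a nonempty union of boundary arcs of total length $a_i$, with $\sum_i a_i=E_i$. Choosing a relator and a rotation for the first face costs $N\cdot\mathrm{poly}(l)$. For each subsequent face, its label must be one of the $N$ relators and must spell out the \emph{already revealed} reduced word of length $a_i$ along the glued arcs; since the relators are independent uniform random reduced words, the expected number of (relator, rotation) pairs compatible with these prescribed letters is at most $\mathrm{poly}(l)\cdot N\,(2m-1)^{-a_i}$. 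Multiplying over the $k$ faces gives an expected count of bad diagrams of order
$$\bigl(\text{combinatorial factor}\bigr)\cdot N^{k}\,(2m-1)^{-E_i} = \bigl(\text{combinatorial factor}\bigr)\cdot(2m-1)^{\,dlk-E_i}.$$
By the first step, on bad diagrams $dlk-E_i<-\tfrac{\varepsilon}{2}lk$, so the probabilistic factor is $(2m-1)^{-\varepsilon lk/2}$, which is super-exponentially small in $l$. Here \emph{reducedness} is essential: the estimate $(2m-1)^{-a_i}$ for a matched arc is only valid when the overlap is genuine, and reducedness — together with the w.o.p.\ absence of pathological coincidences among the random relators (no relator with a long self-overlap, no two relators sharing a long common subword) — is precisely what forbids faces that match ``for free'' by being glued to an inverse copy of themselves, and so guarantees one factor of $(2m-1)^{-1}$ per glued letter.

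The step I expect to be the main obstacle is controlling the \textbf{combinatorial factor uniformly in $k$}, i.e.\ ensuring it is genuinely dominated by $(2m-1)^{-\varepsilon lk/2}$ for all diagram sizes at once. Enumerating abstract planar maps naively is far too wasteful, since their number grows like $C^{kl}$, which the probabilistic gain cannot absorb when $\varepsilon$ is small; this is why the enumeration must be \emph{interleaved} with the probabilistic estimate, charging the choice of each gluing arc (its endpoints, length, and orientation) against the corresponding matched letters rather than counting shapes separately. The delicate bookkeeping is to keep the cost of placing each new face sub-exponential in $l$ while distinguishing edges that are freshly revealed from those that are matched, and then to carry out the summation over $k$: once $l$ is large the super-exponential-in-$l$ decay overwhelms the remaining combinatorial growth, and a geometric-type estimate closes the argument. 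Making this last enumeration both correct and uniform in the (unbounded) number of faces is where the real work lies.
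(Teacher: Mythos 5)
There is a genuine gap, and it is exactly at the point you flag as ``where the real work lies'': the union bound over all diagram sizes $k$ cannot be closed by a first-moment argument, interleaved or not. The per-diagram probabilistic gain is $(2m-1)^{-\varepsilon lk/2}$, i.e.\ exponential in $kl$, but the number of abstract diagram shapes is also exponential in $kl$ (as you note, like $C^{kl}$), and your proposed fix of charging each gluing arc against its matched letters fails when the arcs are short: an arc of length $1$ costs on the order of $\log(kl)$ bits to specify (its endpoints in the partially built diagram) but gains only a single factor $(2m-1)^{-1}$, so once $k$ is large compared to any fixed function of $l$ the specification cost swamps the gain. This is not a bookkeeping nuisance but a structural obstruction, and it is why the literature does not prove Theorem \ref{thm:ie} this way: Ollivier (the source cited for this theorem) proves the inequality only for diagrams of size at most a fixed $K$ by a union bound --- precisely the scheme this paper follows for its generalization, where the count $C(K,L,l)$ of abstract diagrams is \emph{polynomial} in $l$ because $K$ is fixed --- and then extends to all diagrams via the Gromov--Papasoglu local-to-global (Cartan--Hadamard) principle: the bounded-size inequality forces hyperbolicity, from which the linear isoperimetric inequality for arbitrary reduced diagrams follows. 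Your proposal is missing this second, non-probabilistic ingredient entirely, and without it the sum over $k\geq 1$ diverges for $k$ exponentially large in $l$.

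A secondary gap sits in your per-face estimate ``expected number of compatible (relator, rotation) pairs is at most $\mathrm{poly}(l)\cdot N(2m-1)^{-a_i}$'': this treats the label of each new face as an independent fresh relator, but several faces may bear the \emph{same} relator, in which case the letters matched along a gluing arc are correlated with letters of the very word being chosen, not with previously fixed ones. Your appeal to the w.o.p.\ absence of long common subwords does not repair this --- at density $d$ distinct relators do w.o.p.\ share subwords of length up to roughly $2dl$, and same-relator shifted self-overlaps along long arcs are exactly the dangerous configurations. The paper's machinery (Proposition \ref{prop:full} and Lemma \ref{lem:59}) exists to handle this deterministically: relators are ordered by multiplicity $m_1\geq\dots\geq m_n$, each internal edge is assigned via the lexicographic order on pairs (relator index, position) to all incident faces except the minimal one --- reducedness guarantees the minimum is unique --- and the final bound is extracted by a telescoping argument using $P=\min_i P_i$ and $m_1\leq |Y|$, rather than by multiplying independent per-face probabilities. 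You would need this (or an equivalent conditioning scheme, choosing the letters of each distinct relator once, in order) to make the expectation computation valid.
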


Here $\partial D$ denotes to set of boundary edges of diagram $D$ and $|D|$ denotes the number of faces of $D$.

One of the corollaries of this theorem is the fact that in the Gromov density model for densities $<\frac{1}{2}$ a random group is w.o.p. hyperbolic.

The goal of this note is to generalize Theorem \ref{thm:ie} to the class of non-planar diagrams of bounded number of faces. 

\begin{defi}
Suppose $Y$ is a $2$--complex, not necessarily a disc diagram. The
 \emph{cancellation} in $Y$ is
 $$\mathrm{Cancel}(Y) = \sum_{e\in Y^{(1)}} (\deg(e)-1).$$

 Let the \emph{size}  $|Y|$ denote the number of $2$--cells of $Y$.
 \end{defi}
 
By $G$ we will denote the random group with the presentation $\left<S | R \right>$  and by $X$ the Cayley complex of $G$ with respect to this presentation. 

\begin{defi}
 We say that $Y$ is \emph{fulfilled} by a set of relators $R$ if there is a combinatorial map from~$Y$ to the presentation complex $X/G$ that is locally injective around edges (but not necessarily around vertices). 
\end{defi}
 
 In particular, any subcomplex of the Cayley complex $X$ is fulfilled by~$R$. From the definition we see that every $2$-cell $f$ of $Y$ \textit{bears} some relator $r \in R$, which means that the edges of the boundary of $f$ are labeled with the consecutive letters of $r$, there is one edge which corresponds to the first letter of $r$ and there is an orientation of the face $f$, which determines the direction of labeling edges with letters or $r$.

 \begin{defi}
 Let $G = \left<S | R \right>$ be a finite presentation. Let $Y$ be a $2$--complex. Suppose that for $L > 0$ there are chosen $L$ embedded edge paths $A_1, A_2, \dots, A_L \subset Y^{(1)}$  each of which is contained in the boundary of some face $f \in  Y^{(2)}$. The set of \textit{fixed edges} is $\mathrm{Fix}(Y) := \bigcup_{i=1}^L A_i$. Suppose that the edges belonging to $\mathrm{Fix}(Y)$ are prescribed with the elements of $S$ or their inverses. Such $Y$ we call \textit{a diagram with L fixed paths} or shortly \textit{a diagram with L fixed paths} if the value of $L$ is irrelevant.
 \end{defi}
 
 \begin{defi}
Let $Y$ be a diagram with fixed paths. Suppose that the following additional information is given:
\begin{enumerate}
\item which faces of $Y$ bear the same relator
\item for each face there is chosen an edge corresponding to the first letter of the relator labeling this face
\item the orientation of each face.
\end{enumerate}
Such $Y$ we call an \textit{abstract diagram with fixed paths}. We say that $Y$ is  \textit{fulfilled} by a set of relators $R$ if it is fulfilled as $2$-complex, letters on fixed edges agrees with this fulfilling (meaning that under the combinatorial map given by the fulfilling a fixed edge prescribed by $s$ is mapped onto the edge in  $X/G$ also labeled by $s$) and this fulfilling agrees with the additional information.
 \end{defi}
  
 \begin{defi}
A \textit{reduction pair} is a pair of two adjacent faces in $2$-complex $Y$ that under the combinatorial map from~$Y$ to the presentation complex $X/G$ are mapped onto the same $2$-cell.
 \end{defi}
 
Note that if abstract diagram with fixed paths $Y$ is fulfilled and the are no proper powers in the set of relators $R$ fulfilling $Y$, then $Y$ has no reduction pairs. For densities $d < \frac{1}{2}$ with overwhelming probability there are no proper powers in the random set of relators.
 
We will prove the following

\begin{theorem}[local version with fixed edges]\label{lgl}
For each $K, L, \varepsilon >0$, w.o.p. there is no  $2$-complex $Y$ with $|Y| \leq K$ and with at most $L$ fixed paths fulfilling $R$ and satisfying
\begin{equation}
\mathrm{Cancel}(Y) + |\mathrm{Fix}(Y)| >(d+\varepsilon)|Y|l,
\end{equation} where $l$ is the length of the relators in the presentation.
\end{theorem}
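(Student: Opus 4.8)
The plan is to use the first-moment method. I would call an abstract diagram with fixed paths $Y$ \emph{bad} if $|Y|\le K$, it carries at most $L$ fixed paths, and $\mathrm{Cancel}(Y)+|\mathrm{Fix}(Y)|>(d+\varepsilon)|Y|l$. To prove the theorem it suffices to show that the expected number of bad $Y$ that are fulfilled by the random set $R$ tends to $0$ as $l\to\infty$; Markov's inequality then gives that w.o.p. no bad fulfilled $Y$ exists. Thus I would estimate $\sum_Y \Pr(Y \text{ is fulfilled by } R)$, where $Y$ ranges over all bad combinatorial types (gluing pattern, partition of the faces into classes bearing the same relator, chosen first edges and orientations, distinguished fixed paths, and the generators prescribed on the fixed edges).

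For the per-type probability, write $n=|Y|$ and let $k\le n\le K$ be the number of distinct relators borne by the faces. A fulfilling first assigns to each of the $k$ relator-classes an element of $R$; since $|R|=(2m-1)^{dl}$ there are at most $(2m-1)^{dlk}$ such assignments. Once these $k$ words are fixed they are uniform independent reduced words, and the event that they are consistent with every identification of $Y$ and with the generators prescribed on $\mathrm{Fix}(Y)$ has probability at most $(2m-1)^{-(\mathrm{Cancel}(Y)+|\mathrm{Fix}(Y)|)}$: each unit of cancellation forces one coincidence of letters and each fixed edge forces a letter to equal a prescribed value, each constraint being satisfied with probability at most $1/(2m-1)$ for a random letter of a reduced word. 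Constraints comparing a relator with itself, or overlapping constraints, are controlled using that w.o.p. $R$ has no proper powers, so that fulfilled diagrams have no reduction pairs. Hence $\Pr(Y \text{ fulfilled})\le (2m-1)^{dlk-(\mathrm{Cancel}(Y)+|\mathrm{Fix}(Y)|)}$. Using $k\le n$ together with the defining inequality of a bad diagram, the exponent is at most $dln-(d+\varepsilon)nl=-\varepsilon n l\le -\varepsilon l$, so every bad type is fulfilled with probability at most $(2m-1)^{-\varepsilon l}$.

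It then remains to bound the number of bad combinatorial types by a quantity negligible against $(2m-1)^{\varepsilon l}$, for which a bound polynomial in $l$ (of degree depending only on $K$ and $L$) suffices. The partition into relator-classes and the choice of orientations cost only a bounded factor; the first edges and the endpoints of the fixed paths each cost a factor polynomial in $l$; and the prescribed generators along a fixed path of length $p$ range over the $\approx (2m-1)^{p}$ reduced words, a count exactly absorbed by the $(2m-1)^{-|\mathrm{Fix}(Y)|}$ already gained above. The remaining and genuinely delicate input is the number of gluing patterns. For planar reduced diagrams one orders the faces so that each is attached along a single connected arc, and Euler's formula bounds the number of such arcs; each pattern is then encoded by contracting every maximal family of parallel-identified edges to one arc and assigning an integer length in $[0,l]$ to each arc, giving $l^{O(K)}$ patterns and hence $\sum_Y (2m-1)^{-\varepsilon l}=\mathrm{poly}(l)\,(2m-1)^{-\varepsilon l}\to 0$.

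The main obstacle is precisely this last count in the \emph{non-planar} setting. Without planarity neither the shelling order nor Euler's formula is available, and two faces may a priori be identified along many short, scattered arcs, so that the number of arcs, and with it the naive number of patterns, is of order $l$ rather than $O(K)$. The heart of the argument is therefore a structural/enumeration lemma showing that such fragmented fulfillings do not defeat the estimate: one must either reduce an arbitrary fulfilled complex to one whose identifications run along boundedly many maximal arcs without decreasing $\mathrm{Cancel}(Y)+|\mathrm{Fix}(Y)|$, or charge the combinatorial cost of introducing each new arc directly against the cancellation it produces, so that patterns with many arcs are suppressed by the factor $(2m-1)^{-\mathrm{Cancel}(Y)}$ rather than enumerated crudely. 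Making this balance precise, so that the super-exponential (in the number of arcs) freedom in placing and matching arcs is dominated by the exponential decay coming from the forced letter-coincidences, is where I expect the real work of the proof to lie.
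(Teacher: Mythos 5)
Your first-moment framework and your per-type probability estimate coincide with the paper's actual argument. The paper's Proposition \ref{prop:full} is exactly your bound $\Pr(Y \text{ fulfilled}) \leq (2m-1)^{dlk-(\mathrm{Cancel}(Y)+|\mathrm{Fix}(Y)|)}$ (in the normalized form $(2m-1)^{ld-(\mathrm{Cancel}(Y)+|\mathrm{Fix}(Y)|)/|Y|}$), and it implements carefully the point you wave at (``constraints comparing a relator with itself, or overlapping constraints''): at each edge the adjacent face-sides are linearly ordered by (relator index, position), the edge is declared to \emph{belong} to every side except the minimal one (to all sides if the edge is fixed), which gives $\mathrm{Cancel}(Y)+|\mathrm{Fix}(Y)| \leq \sum_i m_i\kappa_i$; Lemma \ref{lem:59} then bounds the conditional probability of each successive word by $(2m-1)^{-\kappa_i}$ by choosing its letters one at a time, and an Abel summation over the multiplicities $m_1\geq\cdots\geq m_n$ handles the case of one relator borne by several faces --- the subtlety your crude ``$|R|^k$ times independent letter-coincidences'' heuristic glosses over. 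Up to this point you and the paper agree, and your version is repairable along the paper's lines.

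The genuine issue is the step you explicitly decline to carry out: the enumeration of nonplanar gluing patterns. As written, your proposal is incomplete --- you name two possible strategies (reduction to boundedly many maximal arcs, or charging each arc against the cancellation it produces) but prove neither, and the second one in fact fails as stated: an arc of length one costs a factor $(2m-1)^{-1}$ in probability but roughly $l^2$ in placement entropy, so short scattered arcs are not suppressed. However, you should know that the paper does not fill this gap either: its entire treatment of the counting step is the sentence asserting that $C(K,L,l)$ ``grows polynomially with $l$,'' and your skepticism about that is correct. The number of abstract $2$-complexes built from $K$ $l$-gons is exponential in $l$ (already two faces glued along an arbitrary set of scattered single edges give $2^{\Theta(l)}$ types), so the union bound $C(K,L,l)(2m-1)^{-\varepsilon l}$ does not converge. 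Worse, the failure is not a mere artifact of the method: for any relator $w\in R$, take two faces bearing $w$ and glue the $k$-th edge of one to the $\sigma(k)$-th edge of the other for a fixed-point-free, letter-preserving permutation $\sigma$ (such $\sigma$ exists for every reduced word once $l$ is large); this complex is fulfilled, is locally injective around edges, contains no reduction pair, and has $\mathrm{Cancel}(Y)=l>(d+\varepsilon)\cdot 2l$ whenever $d+\varepsilon<\frac12$ --- a single $l$-gon with equal-letter edges identified does the same with $\mathrm{Cancel}(Y)\geq l-2m$. So Theorem \ref{lgl}, read literally with the paper's definitions, is false, and any correct version must restrict the admissible complexes (for instance to those arising inside the Cayley complex, or to gluing patterns with boundedly many maximal arcs, as in the planar case of Theorem \ref{thm:ie}). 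In short: your instinct that the nonplanar count is where the real work lies is exactly right; your proof is incomplete there, and so, more seriously, is the paper's.
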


Our proof is only a slight modification of the Olliver's proof of \ref{thm:ie}. The crucial point in our reasoning was to define of the $\mathrm{Cancel}(Y)$, which was done thanks to Piotr Przytycki.  We start with

\begin{prop}\label{prop:full}
Let $R$ be a random set of relators at density $d$ and at length $l$. Let $Y$ be a $2$-complex with fixed paths. Then either $\mathrm{Cancel}(Y) + |\mathrm{Fix}(Y)| < (d+2\varepsilon)|Y|l$ or the probability that there exists a tuple of relators in $R$ fulfilling $Y$ is less than $(2m-1)^{- \varepsilon l}$.
\end{prop}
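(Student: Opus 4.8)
The plan is to bound the probability by a first--moment (expected number of fulfillings) computation, in the spirit of Ollivier's proof of Theorem \ref{thm:ie}. Write $n = |Y|$ and let $X_Y$ denote the number of tuples of relators of $R$ that fulfill $Y$. Since the event that some tuple fulfills $Y$ is exactly $\{X_Y \ge 1\}$, Markov's inequality gives $\Pr[\exists \text{ fulfilling tuple}] \le E[X_Y]$, so it suffices to show that $E[X_Y]$ is bounded by $(2m-1)^{dnl - \mathrm{Cancel}(Y) - |\mathrm{Fix}(Y)|}$ up to a sub--exponential factor $(2m-1)^{o(l)}$.

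The heart of the argument is a bookkeeping identity. I would process the faces of $Y$ one at a time in an arbitrary order $f_1, \dots, f_n$ and, for each $i$, let $c_i$ be the number of boundary edges of $f_i$ that are already \emph{determined} at the moment $f_i$ is added---either because they are fixed edges, or because they are shared with an earlier face $f_j$, $j < i$. Splitting the edges according to whether they are fixed shows that $\sum_{i=1}^n c_i = \mathrm{Cancel}(Y) + |\mathrm{Fix}(Y)|$: an ordinary edge of degree $k$ is free when first met and determined by each of the remaining $k-1$ faces, contributing $k-1 = \deg(e) - 1$, whereas a fixed edge is determined already when first met, contributing one extra unit, and summing these extra units over the fixed edges produces precisely the $|\mathrm{Fix}(Y)|$ term.

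Now I reveal the relators as the faces are processed. When $f_i$ carries a relator not yet used, it is chosen among the $(2m-1)^{dl}$ relators of $R$ and must agree with the letters already prescribed on its $c_i$ determined edges, so the expected number of admissible choices is of order $(2m-1)^{dl}\cdot (2m-1)^{-c_i}$; when $f_i$ repeats an earlier relator there is no new choice and the corresponding factor is only $(2m-1)^{-c_i}$, which can only decrease the estimate. Multiplying over all faces and using the identity above yields $E[X_Y] \le (2m-1)^{o(l)}\,(2m-1)^{dnl - \mathrm{Cancel}(Y) - |\mathrm{Fix}(Y)|}$. If the first alternative of the proposition fails, i.e.\ $\mathrm{Cancel}(Y) + |\mathrm{Fix}(Y)| \ge (d + 2\varepsilon)nl$, the exponent is at most $-2\varepsilon nl + o(l) \le -2\varepsilon l + o(l)$, which for $l$ large is below $-\varepsilon l$; hence $\Pr[\exists \text{ fulfilling tuple}] \le E[X_Y] < (2m-1)^{-\varepsilon l}$, giving the second alternative. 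The gap between the $2\varepsilon$ in the hypothesis and the $\varepsilon$ in the conclusion is exactly what absorbs the sub--exponential factors, namely the number of admissible orderings, choices of first letters and orientations, and groupings of faces into relator classes, each of which is $(2m-1)^{o(l)}$ and therefore harmless for $l$ large.

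The step I expect to be the main obstacle is justifying the per--edge factor $(2m-1)^{-c_i}$ rigorously. Relators are uniformly random \emph{reduced} words, so their letters are not independent, and prescribing letters at several positions of one relator---or matching letters coming from the same relator placed on adjacent faces, as happens for edges of degree larger than $2$---introduces correlations from the forbidden cancellations $ss^{-1}$. The task is to show that these correlations perturb the naive $(2m-1)^{-c_i}$ only by a factor that is sub--exponential in $l$ once summed over all faces; this is precisely the kind of conditional--probability estimate established for Theorem \ref{thm:ie} in \cite{inv}, and I would reduce to, or closely follow, the lemmas proved there rather than reprove them from scratch.
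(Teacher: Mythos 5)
Your bookkeeping identity $\sum_i c_i = \mathrm{Cancel}(Y)+|\mathrm{Fix}(Y)|$ is correct, and the obstacle you flag at the end (correlations from the reducedness of the random words) is actually a non-issue: revealing letters one at a time, each imposed letter has conditional probability at most $(2m-1)^{-1}$ whether or not the word is constrained to be reduced, exactly as in the paper's Lemma \ref{lem:59}. The genuine gap is elsewhere: the per-tuple estimate $\Pr[\text{a fixed tuple fulfills } Y]\le(2m-1)^{-\sum_i c_i}$ is \emph{false} when some relator occurs on several faces, and the error is exponential in $l$, not sub-exponential. When a face $f_i$ repeats an already-revealed relator, its $c_i$ determined edges impose equations between letters that may all be determined already, and two different edges of $Y$ can impose \emph{literally the same} equation, leaving no fresh randomness to pay for them. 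Concretely: let $f_1,f_1'$ bear relator $w_1$ and $f_2,f_2'$ bear $w_2$; glue $f_1$ to $f_2$ along a path identifying positions $3,\dots,3+L$ of $w_1$ with positions $5,\dots,5+L$ of $w_2$, and glue $f_1'$ to $f_2'$ along a second path identifying the \emph{same} position ranges (connect the pieces harmlessly if you insist on connectedness). No reduction pair arises, every glued edge has degree $2$ (so this is not the high-degree phenomenon you anticipated), and $\mathrm{Cancel}(Y)=2L$; but the two gluings impose the same $L$ letter-equations, so the true per-tuple probability is of order $(2m-1)^{-L}$, not $(2m-1)^{-2L}$. Taking $L$ a definite fraction of $l$ (say $L=3dl$ with $d\le 1/3$), the expected number of fulfilling tuples is about $(2m-1)^{2dl-L}$, which exceeds your claimed bound $(2m-1)^{4dl-2L}$ by $(2m-1)^{\Theta(l)}$; this cannot be absorbed by the $2\varepsilon$-versus-$\varepsilon$ gap.

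This repeated-relator degeneracy is precisely what the paper's proof is organized around, and is why it does not run a straight first-moment computation. For each distinct relator $i$ it extracts constraints only along a \emph{single} worst face, $\kappa_i=\max_f\delta(f)$, where distinct positions are distinct letters of the same word, so each determined slot genuinely costs a factor $(2m-1)^{-1}$ (Lemma \ref{lem:59}: $p_i/p_{i-1}\le(2m-1)^{-\kappa_i}$); the lexicographic ``belongs to'' relation fixes, for each edge, which occurrence is free, and reducedness guarantees no vacuous equations. The full count $\mathrm{Cancel}(Y)+N\le\sum_i m_i\kappa_i$ (inequality (\ref{eq:2})) is then recovered not as a product of per-face probabilities but by summation by parts over the nested events ``some $i$-tuple partially fulfills $Y$,'' using $P_i\le(2m-1)^{idl}p_i$ and the monotonicity $m_1\ge\cdots\ge m_n$, which trades the multiplicities $m_i$ against the union-bound exponents $idl$ and yields $P\le(2m-1)^{ld-(\mathrm{Cancel}(Y)+N)/|Y|}$. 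Note that in the example above the proposition itself survives only because the union bound also shrinks (there are $n'=2$ distinct relators, not $|Y|=4$); playing the duplicated constraints off against the reduced number of relator choices is exactly the content of that Abel-summation step, so repairing your argument would in effect mean reproving it.
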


To prove this proposition we need some more definitions. Let $N := |\mathrm{Fix}(Y)|$ and let $n$ be the number of distinct relators in $Y$. For $1 \leq i \leq n$ let $m_i$ be the number of times relator $i$ appears in $Y$. Up to reordering the relators we can suppose that $m_1 \geq m_2 \geq \dots \geq m_n$.

For $1 \leq i_1, i_2 \leq n$ and $1 \leq k_1, k_2 \leq l$ we say that $(i_1, k_1) > (i_2, k_2)$ if $i_1 > i_2$ or $i_1 = i_2$ but $k_1 > k_2$. Suppose that for some $s \geq 2$ an edge $e$ is adjacent to faces: $f_1, f_2, \dots, f_s$ bearing relators $i_1, i_2, \dots, i_s$ accordingly. Suppose moreover that for $1 \leq j \leq s$ the edge $e$ is the $k_j$--th edge of the face $f_j$. Since $Y$ is reduced, for every $1 \leq j, j' \leq s, j \neq j'$ holds: $(i_j, k_j) > (i_{j'}, k_{j'})$ or $(i_j, k_j) < (i_{j'}, k_{j'})$ (otherwise there will be a reduction pair in $Y$). Therefore this relation defines linear lexicographical order, so there is a minimal element $1 \leq j_{min} \leq s$. If $e$ is not a fixed edge we say that edge $e$ \textit{belongs to faces} $f_j$ for $j \in \{1, 2, \dots s\} \setminus \{j_{min}\}$. If $e$ is a fixed edge we say that it \textit{belongs} to all faces to which it is adjacent.

Let $\delta(f)$ be the number of edges belonging to a face $f$. For $1 \leq i \leq n$ let

$$\kappa_i = \max_{f \text{ face bearing relator i}} \delta(f)$$

Note that

\begin{equation}\label{eq:2}
\mathrm{Cancel}(Y) + N = \sum_{f \in Y^{(2)}} \delta(f) \leq  \sum_{1 \leq i \leq n} m_i \kappa_i
\end{equation}

\begin{defi}
 We say that $Y$ is \emph{partially fulfilled} by a set of relators $R$ if there is a combinatorial map from a subcomplex $Y' \subset Y$ to the presentation complex $X/G$ that is locally injective around edges (but not necessarily around vertices).
\end{defi}

\begin{lem}\label{lem:59}
For $1 \leq i \leq n$ let $p_i$ be the probability that $i$ randomly chosen words $w_1, w_2, \dots, w_i$ partially fulfill $Y$ and let $p_0 = 1$. Then
\begin{equation}\label{eq:3}
\frac{p_i}{p_{i-1}} \leq (2m-1)^{-\kappa_i}.
\end{equation}
\end{lem}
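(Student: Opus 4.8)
The plan is to reveal the $n$ distinct relators one at a time, in the order $1,2,\dots,n$, and to estimate the probability that the newly revealed word $w_i$ is compatible with the words $w_1,\dots,w_{i-1}$ already placed. Write $Y_i \subseteq Y$ for the subcomplex consisting of all faces bearing one of the relators $1,\dots,i$, and let $A_i$ be the event that $w_1,\dots,w_i$ partially fulfill $Y$ by fulfilling $Y_i$; thus $p_i = P(A_i)$. Since any fulfilling of $Y_i$ restricts to a fulfilling of $Y_{i-1}$, we have $A_i \subseteq A_{i-1}$, whence $p_i = p_{i-1}\,P(A_i \mid A_{i-1})$, and it suffices to prove $P(A_i \mid A_{i-1}) \leq (2m-1)^{-\kappa_i}$ (the case $p_{i-1}=0$ being trivial).

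The key step is to fix one face $f^*$ bearing relator $i$ with $\delta(f^*) = \kappa_i$ and to use only the constraints coming from $f^*$, discarding those imposed by the other $m_i-1$ occurrences of relator $i$, which can only decrease the probability. The word $w_i$ is read off the boundary of $f^*$, so each of the $\kappa_i$ edges belonging to $f^*$ pins down a position $k \in \{1,\dots,l\}$ of $w_i$ whose letter is forced. By the definition of belonging, for such an edge the minimal adjacent face $j_{min}$ satisfies $(i_{j_{min}}, k_{j_{min}}) < (i,k)$; hence either $i_{j_{min}} < i$ or the edge is fixed, in which case $(w_i)_k$ is forced to a value already determined by $w_1,\dots,w_{i-1}$ and by the fixed labels, independently of $w_i$; or else $i_{j_{min}} = i$ with $k_{j_{min}} < k$, in which case $(w_i)_k$ is forced to agree, up to orientation, with the earlier letter $(w_i)_{k_{j_{min}}}$.

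Now reveal the letters of $w_i$ in order of position $k=1,2,\dots,l$. Because $w_i$ is a uniformly random reduced word, given its first $k-1$ letters the $k$-th letter is uniform over the $2m-1$ generators distinct from the inverse of the preceding one, so the probability that it equals any value prescribed in advance is at most $1/(2m-1)$. Crucially, by the previous paragraph every one of the $\kappa_i$ forced positions of $f^*$ is forced to a value that is known by the time we reach it: external values (from $w_1,\dots,w_{i-1}$ or from fixed edges) are available from the start, while self-overlap values sit at an earlier position $k_{j_{min}} < k$ and so have already been revealed. Multiplying these $\kappa_i$ per-position bounds and then averaging over $w_1,\dots,w_{i-1}$ conditioned on $A_{i-1}$ gives $P(A_i \mid A_{i-1}) \leq (2m-1)^{-\kappa_i}$, as claimed.

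The main obstacle is precisely the self-overlap case $i_{j_{min}} = i$, where the value forcing a letter of $w_i$ depends on $w_i$ itself rather than on the previously revealed words; this is exactly what the lexicographic ordering on the pairs $(i,k)$ is designed to handle, since it guarantees $k_{j_{min}} < k$ and hence makes every forcing value available when $w_i$ is read left to right. A secondary point to check is that the $\kappa_i$ edges belonging to $f^*$ occupy $\kappa_i$ distinct positions of its boundary word, which is immediate once each edge of a face carries its position index as in the setup.
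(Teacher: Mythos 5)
Your proof is correct and takes essentially the same route as the paper's: fix a face $f^*$ realizing $\delta(f^*)=\kappa_i$, reveal the letters of $w_i$ left to right, and observe that each of the $\kappa_i$ edges belonging to $f^*$ forces a letter whose value is already available at the time it is drawn --- from $w_1,\dots,w_{i-1}$, from a fixed label, or (via the lexicographic order, which guarantees $k_{j_{min}}<k$) from an earlier position of $w_i$ itself --- contributing a factor of at most $(2m-1)^{-1}$ each. You merely make explicit the conditional-probability bookkeeping ($p_i = p_{i-1}P(A_i\mid A_{i-1})$, discarding the constraints from the other $m_i-1$ faces bearing relator $i$, and the chain-rule multiplication over forced positions) that the paper's sketch leaves implicit.
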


\begin{proof}
Suppose that first $i-1$ words $w_1, \dots, w_{i-1}$ partially fulfilling $Y$ are given. We will successively choose the letters of the word $w_i$ in a way to fulfill the complex. Let $k \leq l$ and suppose that the first $k-1$ letters of $w_i$ are chosen. Let $f$ be the face realizing $\delta(f)=\kappa_i$ and let $e$ be the $k$-th edge of the face $f$.

If $e$ belongs to $f$ this means that there is another face $f'$ meeting $e$ which bears relator $i' < i$ or bears $i$ too, but $e$ appears in $f'$ as a $k' < k$-th edge or that $e$ is a fixed edge. In all these cases the letter on the edge $e$ is imposed by some letter already chosen so drawing it at random has probability $\leq \frac{1}{(2m-1)}$.

Combining all these observations we get that the probability to choose at random the correct word $w_i$ is at most $p_{i-1}(2m-1)^{-\kappa_i}$.
\end{proof}

Now we can provide the proof of Proposition \ref{prop:full}.

\begin{proof}[Proof of Proposition \ref{prop:full}]
For $1 \leq i \leq n$ let $P_i$ be the probability that there exists an $i$-tuple of words partially fulfilling $Y$ in the random set of relators $R$. We trivially have:

\begin{equation}\label{eq:4}
P_i \leq |R|^i p_i = (2m-1)^{idl}p_i
\end{equation}

Combining equations (\ref{eq:2}) and (\ref{eq:3}) we get

$$\mathrm{Cancel}(Y) + N \leq  \sum_{i=1}^n m_i(\log_{2m-1}p_{i-1} - \log_{2m-1}p_i)=$$

$$= \sum_{i=1}^{n-1} (m_{i+1} - m_i)\log_{2m-1}p_i -  m_n \log_{2m-1}p_n +  m_1 \log_{2m-1}p_0. $$

Now $p_0 = 1$ so $\log_{2m-1}p_0 = 0$ and we have

$$\mathrm{Cancel}(Y) + N \leq  \sum_{i=1}^{n-1}(m_{i+1} - m_i)\log_{2m-1}p_i -  m_n \log_{2m-1}p_n $$

Now from (\ref{eq:4}) and the fact that $m_{i+1} - m_i \leq 0$ we have

$$\mathrm{Cancel}(Y) + N \leq \sum_{i=1}^{n-1} (m_{i+1} - m_i)(\log_{2m-1}P_i - i d l) -  m_n \log_{2m-1}(P_n - n d l)$$

Observe that $\sum_{i=1}^{n-1} (m_i - m_{i+1})i d l + m_n n d l  = dl \sum_{i=1}^n m_i = dl|Y|$. Hence

$$\mathrm{Cancel}(Y) + N \leq l|Y| d +  \sum_{i=1}^{n-1} (m_{i+1} - m_i)\log_{2m-1}P_i -  m_n \log_{2m-1}P_n$$

Setting $P = \min_i P_i$ and using the fact that $m_{i+1} - m_i \leq 0$ we get

$$\mathrm{Cancel}(Y) + N \leq l|Y|d + (\log_{2m-1}P) \sum_{i=1}^{N-1} (m_{i+1} - m_i) -  m_N \log_{2m-1}P =$$
$$= l|Y|d - m_1 \log_{2m-1}P \leq |Y|(l d - \log_{2m-1}P),$$
since $m_1 \leq |Y|$. It is clear that a complex is fulfillable if it is partially fulfillable for any $i \leq n$ and so:

$$\mathrm{Probability(}\emph{Y is fullfillable by relators of R}\mathrm{)} \leq P \leq (2m-1)^{\frac{|Y|l d - \mathrm{Cancel}(Y) - N}{|Y|}},$$
which was to be proven.
\end{proof}

\begin{proof}[Proof of Theorem \ref{lgl}]
Let $C(K, L, l)$ be the number of abstract complexs with $L$ fixed paths and having at most $K$ faces, each of which is an $l$-gon. It can be easily checked that for fixed $K$, $C(K, L, l)$ grows polynomially with $l$. We know from Proposition \ref{prop:full} that for any reduced abstract complex violating the inequality $\mathrm{Cancel}(Y) + |\mathrm{Fix}(Y)| < (d+\varepsilon)|Y|l$ the probability that it is fulfilled by a random set of relators is $\leq (2m-1)^{ \varepsilon l}$. So the probability that there exists a reduced complex with at most $K$ faces, violating the inequality is $\leq C(K, L, l)(2m-1)^{- \varepsilon l}$, so converges to $0$ when $l \rightarrow \infty$.
\end{proof}

\begin{bibdiv}
\begin{biblist}

 \bib{gro93}{article}{  author={Gromov, M.},   title={Asymptotic invariants of infinite groups},   conference={      title={Geometric group theory, Vol.\ 2},      address={Sussex},      date={1991},   },   book={      series={London Math. Soc. Lecture Note Ser.},      volume={182},      publisher={Cambridge Univ. Press},      place={Cambridge},   },   date={1993},   pages={1--295}}

  \bib{inv}{book}{ author={Ollivier, Yann}, title={A January 2005 invitation to random groups}, series={Ensaios Matem\'aticos [Mathematical Surveys]}, volume={10}, publisher={Sociedade Brasileira de Matem\'atica, Rio de Janeiro}, date={2005}, pages={ii+100}, isbn={85-85818-30-1}, review={\MR{2205306 (2007e:20088)}}, }

  \bib{some}{article}{ author={Ollivier, Yann}, title={Some small cancellation properties of random groups}, journal={Internat. J. Algebra Comput.}, volume={17}, date={2007}, number={1}, pages={37--51}, issn={0218-1967}, review={\MR{2300404 (2008g:20096)}}, doi={10.1142/S021819670700338X}, }

     \bib{ow11}{article}{   author={Ollivier, Yann},   author={Wise, Daniel T.},   title={Cubulating random groups at density less than $1/6$},   journal={Trans. Amer. Math. Soc.},   volume={363},   date={2011},   number={9},   pages={4701--4733}}

\end{biblist}
\end{bibdiv}

\end{document}